\numberwithin{equation}{section}
\numberwithin{figure}{section}
\theoremstyle{plain}
\newtheorem{thm}{\protect\theoremname}
\theoremstyle{definition}
\theoremstyle{definition}
\theoremstyle{plain}
\newtheorem{lemma}{Lemma}
\def \De{\Delta}
\def \Om{\Omega}
\def \th{\theta}
\def \AAA{{\cal A}}
\def \RR{{\mathbb R}}
\def \DD{{\mathbb D}}
\def \CC{{\mathbb C}}
\def \HH{{\cal H}}
\def \ff{\infty}
\numberwithin{equation}{section}
\numberwithin{figure}{section}
\providecommand{\definitionname}{Definition}
\providecommand{\examplename}{Example}
\providecommand{\theoremname}{Theorem}
\def \vski{\vspace{12pt}}
\def \th{\theta}
\begin{document}
\title{Remarks on Gross' technique for obtaining a conformal Skorohod embedding of planar Brownian motion}

\author{Maher Boudabra, Greg Markowsky\\
Monash University}

\maketitle

\begin{abstract}
In \citep{gross2019conformal} it was proved that, given a distribution
$\mu$ with zero mean and finite second moment, there exists a simply
connected domain $\Omega$ such that if $Z_{t}$ is a standard planar
Brownian motion, then $\mathcal{R}e(Z_{\tau_{\Omega}})$ has the distribution $\mu$.
In this note, we extend this method to prove that if $\mu$
has a finite $p$-th moment then the exit time $\tau_{\Omega}$ has
a finite moment of order $\frac{p}{2}$. We also prove a uniqueness principle for this construction, and use it to give several examples.
\end{abstract}

\section{Introduction and statement of results}

In what follows, $Z_{t}$ is a standard planar Brownian motion starting at 0, and for any plane domain $\Omega$ containing 0 we let $\tau_{\Omega}$ denote the first exit time of $Z_t$ from $\Omega$. In the elegant recent paper \citep{gross2019conformal} the following theorem was proved.

\begin{thm}
Given a probability distribution $\mu$ on $\RR$ with zero mean and finite nonzero second moment,
we can find a simply connected domain $\Omega$ such that $\mathcal{R}e(Z_{\tau_{\Omega}})$ has
the distribution $\mu$. Furthermore we have $E[\tau_{\Omega}]<\ff$.
\end{thm}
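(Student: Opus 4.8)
The plan is to realize $\Omega$ as the image of a univalent holomorphic map $\varphi\colon\DD\to\CC$ with $\varphi(0)=0$ whose real part on the boundary has law $\mu$, and then to extract the bound on $E[\tau_{\Omega}]$ from the $L^{2}$ theory of conjugate harmonic functions. The tool used throughout is the conformal invariance of planar Brownian motion: if $\varphi$ is such a map and $W$ is a Brownian motion started at $0$ in $\DD$, then $\varphi(W_{t})$ is a time-changed planar Brownian motion, so $\tau_{\Omega}\stackrel{d}{=}\int_{0}^{\tau_{\DD}}|\varphi'(W_{s})|^{2}\,ds$; moreover $W_{\tau_{\DD}}$ is uniformly distributed on $\partial\DD$ and harmonic measure is conformally natural, so $Z_{\tau_{\Omega}}$ has the law of $\varphi^{*}(W_{\tau_{\DD}})$, where $\varphi^{*}$ denotes non-tangential boundary values. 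Hence it suffices to produce a univalent $\varphi$ with $\varphi(0)=0$ such that $\mathcal{R}e\,\varphi^{*}$ pushes normalized arclength on $\partial\DD$ forward to $\mu$.

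To construct $\varphi$, let $F$ be the distribution function of $\mu$ and $q=F^{-1}$ its quantile function, and prescribe the real boundary data $\mathcal{R}e\,\varphi^{*}(e^{i\theta}):=q(|\theta|/\pi)$ for $\theta\in(-\pi,\pi]$. This function lies in $L^{1}(\partial\DD)$, since $\int_{-\pi}^{\pi}|q(|\theta|/\pi)|\,d\theta=2\pi\int_{0}^{1}|q(t)|\,dt=2\pi\int|x|\,d\mu(x)<\ff$, so it has a Poisson extension $u$; let $\varphi=u+iv$ be the holomorphic function on $\DD$ with this real part, normalized by $v(0)=0$. If $\Theta$ is uniform on $(-\pi,\pi]$ then $|\Theta|/\pi$ is uniform on $(0,1)$, so $\mathcal{R}e\,\varphi^{*}(e^{i\Theta})=q(|\Theta|/\pi)$ has law $\mu$ by construction. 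The hypothesis that $\mu$ has mean zero is used precisely here: it forces $\mathcal{R}e\,\varphi(0)=\tfrac{1}{2\pi}\int_{-\pi}^{\pi}q(|\theta|/\pi)\,d\theta=\int_{0}^{1}q(t)\,dt=0$, hence $\varphi(0)=0$. Since the data are even in $\theta$ we get $\varphi(\bar z)=\overline{\varphi(z)}$, so $\Omega$ is symmetric about $\RR$, and $\mathcal{R}e\,\varphi^{*}$ is monotone on each of the two semicircles.

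The step I expect to be the crux, and the essential content of Gross' technique, is to prove that this $\varphi$ is univalent, so that $\Omega:=\varphi(\DD)$ is an honest simply connected plane domain and not merely an abstract Riemann surface. The plan is to exploit the monotonicity of $\mathcal{R}e\,\varphi^{*}$ on each semicircle to show that $\Omega$ is convex in the direction of the imaginary axis (every vertical line meets $\Omega$ in an interval), a property that by classical results forces $\varphi$ to be univalent; concretely this comes down to a sign analysis of $\mathcal{I}m\bigl\{\varphi'(e^{i\theta})e^{i\theta}\bigr\}=-\tfrac{d}{d\theta}\mathcal{R}e\,\varphi^{*}(e^{i\theta})$ together with control of the boundary curve. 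If a direct argument is awkward for general $\mu$, a robust alternative is to prove the theorem first for finitely supported $\mu$ --- where $\Omega$ can be taken to be an explicit vertical strip with finitely many horizontal slits deleted, the slit heights being pinned down by a degree or continuity argument so as to match the atoms and their masses (the mean-zero condition being automatic from the martingale property of $\mathcal{R}e\,Z_{t}$) --- and then to reach a general $\mu$ by weak approximation, invoking Carath\'eodory convergence of the domains and the accompanying convergence of the Riemann maps.

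Granting the construction, finiteness of $E[\tau_{\Omega}]$ is almost immediate. Since $\mu$ has a finite second moment, $\int_{\partial\DD}|\mathcal{R}e\,\varphi^{*}|^{2}\,\tfrac{d\theta}{2\pi}=\int x^{2}\,d\mu(x)<\ff$, so $\mathcal{R}e\,\varphi$ belongs to the harmonic Hardy space $h^{2}$, and the $L^{2}$ conjugate function theorem --- here just the statement that conjugation does not increase the $\ell^{2}$ norm of Fourier coefficients --- gives $\mathcal{I}m\,\varphi\in h^{2}$, whence $\varphi\in H^{2}(\DD)$, say $\varphi(z)=\sum_{n\ge1}a_{n}z^{n}$ with vanishing constant term because $\varphi(0)=0$. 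By Parseval, using $\varphi(0)=0$ (equivalently the mean-value property applied to $\varphi^{2}$), $\int_{\partial\DD}|\varphi^{*}|^{2}\,\tfrac{d\theta}{2\pi}=\sum_{n\ge1}|a_{n}|^{2}=2\int_{\partial\DD}|\mathcal{R}e\,\varphi^{*}|^{2}\,\tfrac{d\theta}{2\pi}=2\int x^{2}\,d\mu(x)$. On the other hand, taking expectations in the time-change identity and using $E_{0}\bigl[\int_{0}^{\tau_{\DD}}g(W_{s})\,ds\bigr]=\int_{\DD}g(w)\,G_{\DD}(0,w)\,dA(w)$ with Green's function $G_{\DD}(0,w)=\tfrac{1}{\pi}\log\tfrac{1}{|w|}$, orthogonality of the $e^{ik\psi}$ gives $E[\tau_{\Omega}]=\int_{\DD}|\varphi'(w)|^{2}\,G_{\DD}(0,w)\,dA(w)=\tfrac{1}{2}\sum_{n\ge1}|a_{n}|^{2}$. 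Combining the two, $E[\tau_{\Omega}]=\int x^{2}\,d\mu(x)<\ff$, which is the assertion --- indeed the sharp identity that $E[\tau_{\Omega}]$ equals the variance of $\mu$.
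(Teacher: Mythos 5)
Your overall route is the same as the paper's (which itself is a sketch of Gross' argument): prescribe the boundary real part as the quantile function $G(|\theta|/\pi)$, pass to the analytic completion (conjugate function / Hilbert transform), use conformal invariance of $Z_t$ and uniformity of $Z_{\tau_{\DD}}$ to get $\mathcal{R}e(Z_{\tau_\Omega})\sim\mu$, and compute $E[\tau_\Omega]$ via the Green's function and Parseval; your identity $E[\tau_\Omega]=\tfrac12\sum_{n\ge1}|a_n|^2=\mathrm{Var}(\mu)$ is exactly the paper's $E[\tau_\Omega]=\tfrac12\sum_{n\ge1}|\widehat{\varphi}(n)|^2$, and that part of your write-up is correct.

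The genuine gap is the univalence of $\varphi$, which you yourself call the crux and then do not prove: you only state a plan. The paper disposes of this step by citing Gross, whose argument runs through a univalence \emph{criterion on the function} (a Robertson/Royster--Ziegler type result: monotonicity of the boundary real part gives $\mathcal{R}e\bigl[(1-z^2)\varphi'(z)\bigr]\ge 0$, first on the boundary via your identity $\mathcal{I}m\{e^{i\theta}\varphi'(e^{i\theta})\}=-\tfrac{d}{d\theta}\mathcal{R}e\,\varphi^*(e^{i\theta})$ and then in the interior, and such positivity implies $\varphi$ is univalent and convex in the direction of the imaginary axis). As you phrase it, the logic is circular: saying ``$\Omega$ is convex in the direction of the imaginary axis, a property that forces $\varphi$ to be univalent'' presupposes that $\Omega=\varphi(\DD)$ is a faithfully parameterized plane domain, which is what univalence is supposed to deliver; moreover the passage from a.e.\ boundary sign information to interior positivity of $\mathcal{R}e[(1-z^2)\varphi']$ requires an argument ($\varphi'$ need not extend continuously; one works with a Poisson/Herglotz representation or smoothed data and a limit), and none is supplied. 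Your fallback is also shaky as described: for finitely supported $\mu$ the atoms of $\mathcal{R}e(Z_\tau)$ come from \emph{vertical} boundary segments, so the model domains are vertical strips with symmetric pairs of vertical rays removed (not horizontal slits), and the Carath\'eodory-approximation step would still need a proof that the limit map is univalent and nonconstant and that the exit distributions converge, so it is not a shortcut around the univalence issue.
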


We will prove several new results related to Gross' construction. Our first result is the following generalization.

\begin{thm} \label{newguy}
Given a probability distribution $\mu$ on $\RR$ with zero mean and finite nonzero $p$-th moment (with $1<p<\ff$),
we can find a simply connected domain $\Omega$ such that $\mathcal{R}e(Z_{\tau_{\Omega}})$ has
the distribution $\mu$. Furthermore we have $E[(\tau_{\Omega})^{p/2}]<\ff$.
\end{thm}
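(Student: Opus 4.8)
The plan is to run Gross' construction and then convert the desired moment bound for $\tau_{\Omega}$ into an $L^{p}$ statement about a conformal map, the crucial new input being M.~Riesz's conjugate function theorem (the case $p=2$ being essentially the Parseval identity underlying the bound $E[\tau_{\Omega}]=\mathrm{Var}(\mu)$). Concretely, apply Gross' construction to $\mu$ to produce the simply connected domain $\Omega\ni 0$, and let $\phi\colon\DD\to\Omega$ be the Riemann map normalised by $\phi(0)=0$; the construction of $\Omega$ uses only that $\mu$ has zero mean (the finite second moment in Theorem~1 enters solely in the conclusion $E[\tau_{\Omega}]<\ff$), so it applies here for every $1<p<\ff$. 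By conformal invariance of planar Brownian motion, running a Brownian motion $W$ in $\DD$ from $0$, the processes $\phi(W)$ and $Z$ (stopped at $\tau_{\DD}$, resp.\ $\tau_{\Omega}$) trace the same curve, and $Z_{\tau_{\Omega}}\stackrel{d}{=}\phi^{\ast}(e^{i\Theta})$ where $\Theta$ is uniform on $[0,2\pi)$ and $\phi^{\ast}$ denotes the a.e.\ boundary values of $\phi$ (which exist, $\phi$ being univalent hence of bounded characteristic). Thus $\mathcal{R}e(\phi^{\ast})$ has law $\mu$, so by hypothesis $\|\mathcal{R}e(\phi^{\ast})\|_{L^{p}(\partial\DD)}^{p}=\int_{\RR}|s|^{p}\,d\mu(s)<\ff$.

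Next I reduce the moment bound to integrability of $\phi^{\ast}$. Since $Z_{t\wedge\tau_{\Omega}}$ is a continuous $\CC$-valued martingale whose total quadratic variation at time $t$ is $2(t\wedge\tau_{\Omega})$, the Burkholder--Davis--Gundy inequality gives $E[\tau_{\Omega}^{p/2}]\le C_{p}\,E[\sup_{t\le\tau_{\Omega}}|Z_{t}|^{p}]$. As the two paths coincide, $\sup_{t\le\tau_{\Omega}}|Z_{t}|=\sup_{s\le\tau_{\DD}}|\phi(W_{s})|$, and the Brownian maximal function of the analytic function $\phi$ is dominated in $L^{p}$ by its boundary values (equivalently $|Z_{t\wedge\tau_{\Omega}}|$ is a non-negative submartingale, so Doob's $L^{p}$ maximal inequality applies since $p>1$), giving $E[\sup_{s\le\tau_{\DD}}|\phi(W_{s})|^{p}]\le C_{p}'\|\phi^{\ast}\|_{L^{p}(\partial\DD)}^{p}=C_{p}'\,E[|Z_{\tau_{\Omega}}|^{p}]$. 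Hence it suffices to show $\phi\in H^{p}$, i.e.\ $\phi^{\ast}\in L^{p}(\partial\DD)$; since $|Z_{\tau_{\Omega}}|^{2}=\mathcal{R}e(Z_{\tau_{\Omega}})^{2}+\mathcal{I}m(Z_{\tau_{\Omega}})^{2}$ and the real part already has finite $p$-th moment, this reduces to showing $E[|\mathcal{I}m(Z_{\tau_{\Omega}})|^{p}]<\ff$.

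But $\phi$ is analytic on $\DD$ with $\mathcal{I}m(\phi(0))=0$, so $\mathcal{I}m(\phi^{\ast})$ is the conjugate function of $\mathcal{R}e(\phi^{\ast})$; by M.~Riesz's theorem the conjugation operator is bounded on $L^{p}(\partial\DD)$ for every $p\in(1,\ff)$ --- precisely the range in the statement --- so $\|\mathcal{I}m(\phi^{\ast})\|_{L^{p}}\le A_{p}\|\mathcal{R}e(\phi^{\ast})\|_{L^{p}}<\ff$. Thus $\phi^{\ast}\in L^{p}(\partial\DD)$, hence $\phi\in H^{p}$, and by the previous paragraph $E[\tau_{\Omega}^{p/2}]<\ff$.

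The step needing the most care is the passage $\mathcal{R}e(\phi^{\ast})\in L^{p}\Rightarrow\phi\in H^{p}$: the identification of $\mathcal{I}m(\phi^{\ast})$ with the conjugate function of $\mathcal{R}e(\phi^{\ast})$, and the clean equivalence $E[\tau_{\Omega}^{p/2}]\asymp E[|Z_{\tau_{\Omega}}|^{p}]$, hold only when $\mathcal{R}e(\phi)$ is the genuine Poisson integral of $\mathcal{R}e(\phi^{\ast})$ --- equivalently, when $\mathcal{R}e(Z_{t\wedge\tau_{\Omega}})$ is a uniformly integrable martingale, i.e.\ $\tau_{\Omega}$ is a minimal stopping time for the embedding of $\mu$ by $\mathcal{R}e(Z)$ --- rather than differing from it by a singular positive harmonic function concentrated at the boundary points where $\phi$ tends to $\infty$. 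When $p\ge 2$ this is automatic, since then $\mu$ has finite variance and $E[\tau_{\Omega}]<\ff$ by Theorem~1; for $1<p<2$ it must be read off from the geometry of Gross' construction (the ``arms'' of $\Omega$ escaping to $\infty$ being not too wide, consistent with $\mu$ having finite mean). Finally I would dispatch the routine limiting point by applying M.~Riesz to the dilates $\phi(r\,\cdot)$ and letting $r\uparrow 1$, using Fatou on one side and the absence of singular mass on the other.
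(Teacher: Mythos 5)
Your high-level strategy is the same as the paper's: the hypothesis on $\mu$ gives that the real part of the boundary function is in $L^p$, M.~Riesz's conjugate function theorem (equivalently the periodic Hilbert transform bound, which is exactly the paper's Theorem \ref{Hilb bound}) controls the imaginary part, one concludes membership in $H^p$, and then an $H^p\Rightarrow E[\tau^{p/2}]<\ff$ step finishes (you reprove that direction of Burkholder's equivalence via BDG plus Doob, where the paper simply cites \citep{burkholder1977exit}). The difference in execution, however, is where the gap lies, and you flag it yourself without closing it: you take the domain $\Omega$ as handed to you by ``Gross' construction'' and work with an abstract Riemann map $\phi$, so your application of M.~Riesz needs $\mathcal{R}e(\phi)$ to be the genuine Poisson integral of $\mathcal{R}e(\phi^{\ast})$ (no singular part), and your Doob/maximal-function step similarly needs the relevant (sub)martingale to be closed by the boundary values. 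For $p\ge 2$ you can fall back on Theorem 1, but for $1<p<2$ you defer to the unspecified ``geometry of Gross' construction''; that is precisely the missing content, and $L^p$ boundary values alone do not rule out a singular component (consider $\exp\bigl((1+z)/(1-z)\bigr)$, whose boundary modulus is $1$ a.e.\ but which lies in no $H^p$). Two further steps you assert are also part of what must be proved in this moment range: that Gross' construction itself goes through assuming only a zero mean and finite $p$-th moment (the paper checks that the Fourier series of $\varphi$ still converges in $L^p$ and that Gross' univalence argument still applies), and the identification $Z_{\tau_\Omega}\stackrel{d}{=}\phi^{\ast}(e^{i\Theta})$, which you invoke before knowing $\phi\in H^p$; the paper proves such a statement as a lemma only later, using $L^p$-bounded martingale convergence, i.e.\ after $H^p$ membership is available.

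The paper's proof avoids all of this by never treating the conformal map as a black box: it constructs the map explicitly as the analytic completion $\widetilde{\varphi}(z)=\sum_{n\ge 1}\widehat{\varphi}(n)z^{n}$ of the Fourier series of the quantile-type function $\varphi(\theta)=G(|\theta|/\pi)$, which lies in $L^p$ precisely because $\mu$ has finite $p$-th moment. The real part of $\widetilde{\varphi}$ is then by construction the Poisson--Abel extension of an $L^p$ function (so no singular part can occur), the periodic Hilbert transform bound applies directly to $\varphi$ to put the conjugate series in $L^p$, univalence is inherited from Gross' argument, and $\mathcal{R}e(\widetilde{\varphi}(e^{i\theta}))=\varphi(\theta)$ has law $\mu$ by definition rather than via a boundary-identification lemma. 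If you want to salvage your argument as written, the cleanest fix is exactly this: define $\phi=\widetilde{\varphi}$ from the construction, at which point your M.~Riesz step is legitimate and your BDG/Doob paragraph becomes an (optional) replacement for the citation of Burkholder's theorem.
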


The proof of this result depends on a number of known properties of the Hilbert transform and of the exit time $\tau_{\Omega}$, and is rather short. However the results needed are scattered through a number of different subfields of probability and analysis, and in an attempt to make the paper self-contained we have included a certain amount of exposition on these topics. We will prove the theorem in the next section.

\vski

There are several reasons why we feel that our extension is worth noting. The moments of $\tau_{\Omega}$ have special importance in two dimensions, as they carry a great deal of analytic and geometric information about the domain $\Omega$. The first major work in this direction seems to have been by Burkholder in \cite{burkholder1977exit}, where it was proved among other things that finiteness of the $p$-th Hardy norm of $\Omega$ is equivalent to finiteness of the $\frac{p}{2}$-th moment of $\tau_{\Omega}$. To be precise, for any simply connected domain $\Omega$
let

$$
{\rm H}(\Omega)=\sup \{p > 0: {\bf E}[(\tau_{\Omega})^p] < \ff\};
$$

note that ${\rm H}(\Omega)$ is proved in \cite{burkholder1977exit} to be exactly
equal to half of the Hardy number of $\Omega$, as defined in \cite{hansen}, which is defined to be

$$
{\tilde{\rm H}}(\Omega)=\sup \{q > 0: \lim_{r \nearrow 1} \int_{0}^{2\pi} |f(re^{i\th})|^qd \th < \ff\},
$$

where $f$ is a conformal map from the unit disk onto $\Omega$. This equivalence was used in \cite{burkholder1977exit} to show for instance that $H(W_\alpha) = \frac{\pi}{2 \alpha}$, where $W_\alpha = \{0<Arg(z)<\alpha\}$ is an infinite angular wedge with angle $\alpha$. In fact, coupled with the purely analytic results in \cite{hansen} this can be used to determine ${\rm H}(\Omega)$ for any starlike domain $\Omega$. If we assume that $V$ is starlike with respect to $0$, then we may define

\begin{equation} \label{bigmax}
\AAA_{r,\Omega} = \max \{m(E): E \mbox{ is a subarc of } \Omega \cap \{|z|=r\}\},
\end{equation}

and this quantity is non-increasing in $r$ (here $m$ denotes angular Lebesgue measure on the circle). We may therefore let $\AAA_\Omega = \lim_{r \nearrow \ff} \AAA_{r,\Omega}$, and then combining the results in \cite{hansen} and \cite{burkholder1977exit} (see also \cite{markowsky}) we have ${\rm H}(\Omega) = \frac{\pi}{2 \AAA_\Omega}$. In this sense, the quantity ${\rm H}(\Omega)$ provides us with some sort of measure of the aperture of the domain at $\ff$. Also in \cite{markowsky}, a version of the Phragm\'en-Lindel\"of principle was proved that makes use of the quantity ${\rm H}(\Omega)$. Furthermore, the quantity ${\bf E}[(\tau_{\Omega})^p]$ provides us with an estimate for the tail probability $P(\tau_{\Omega} > \delta)$: by Markov's inequality, $P(\tau_{\Omega} > \delta) \leq \frac{{\bf E}[(\tau_{\Omega})^p]}{\delta^p}$.

\vski

For these reasons, we would argue that Theorem \ref{newguy} gives a partial answer to the following intriguing question posed by Gross in \citep{gross2019conformal}: given a probability distribution $\mu$ and a corresponding $\Omega$ such that $\mathcal{R}e(Z_{\tau_{\Omega}})$ has distribution $\mu$, in what sense are properties of $\mu$ reflected in the geometric properties of $\Omega$? We will have more comments on this question in the final section.

\vski

Our next result is influenced by Gross' observation that the domain corresponding to a given measure $\mu$ is not unique. Without further conditions this is correct, however we have found that natural conditions can be imposed on the domain so that a uniqueness principle holds. Before stating the result, let us make some definitions. A domain $U$ is {\it symmetric} if $\bar z \in U$ whenever $z \in U$. We will call a symmetric domain $U$ {\it $\De$-convex} if the vertical line segment connecting $z$ and $\bar z$ lies entirely within $U$ for any $z \in U$. It is straightforward to verify that any domain constructed by Gross' technique is both symmetric and $\De$-convex (see Section \ref{hilbert}), and we have the following result.

\iffalse
The answer is affirmative as we would show
it, so let $U,V$ be two possible domains satisfying the two following
common properties :
\begin{itemize}
\item[i.] The real part of the Brownian motion starting at zero, evaluated
at the exit time, has the distribution $\mu$.
\item[ii.] They are symmetric and convex with respect to the real axis.
\end{itemize}

\fi

\begin{thm} \label{thm:The-distribution-}
For any distribution $\mu$ satisfying the conditions of Theorem \ref{newguy}, there is a unique simply connected domain $\Omega$ such that $Re(Z_{\tau_\Om}) \sim \mu$ and which is symmetric, $\De$-convex, and satisfies $E[(\tau_\Om)^{p/2}] < \ff$.
\end{thm}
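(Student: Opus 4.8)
The plan is to reconstruct Gross' domain explicitly from $\mu$ and then show that symmetry, $\Delta$-convexity, and the moment condition pin it down. Recall (from Section \ref{hilbert}, to be used freely) that Gross' construction builds $\Omega$ as follows: the distribution $\mu$ of $\mathcal{R}e(Z_{\tau_\Omega})$ determines, via the skew-product / conformal-invariance description of exit distributions, a function on the boundary; concretely, a symmetric $\Delta$-convex domain is determined by its intersection with each vertical line $\{\mathcal{R}e(z) = x\}$, which is a symmetric interval $(-i h(x), i h(x))$ for some profile function $h \ge 0$ (possibly $h(x) = 0$ or $\infty$), so such a domain is exactly the region $\Omega_h = \{x + iy : |y| < h(x)\}$ for an appropriate $h$. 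Thus the uniqueness claim is equivalent to: the law of $\mathcal{R}e(Z_{\tau_{\Omega_h}})$ determines $h$ uniquely among profile functions yielding $E[(\tau_{\Omega_h})^{p/2}] < \infty$.

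First I would set up the correspondence between $h$ and $\mu$ precisely. Writing $\Omega_h$ as above, the key point is that $\mathcal{R}e(Z_t)$ and the clock are governed in a tractable way: one lets $W_t = \mathcal{R}e(Z_t)$, a one-dimensional Brownian motion, and observes that $\mathcal{R}e(Z_{\tau_{\Omega_h}})$ is $W$ stopped at the first time its "vertical excursion" exceeds the available half-height $h(W_t)$. The cleanest route, and the one consistent with Gross' method and with the Hilbert-transform material flagged in the introduction, is to use the conformal map $f$ from a horizontal strip (or the disk) onto $\Omega_h$: since $\Omega_h$ is symmetric, $f$ maps the real axis to the real axis, and the boundary values of $\mathcal{R}e f$ realize the quantile function of $\mu$ while $\mathcal{R}e f + i \mathcal{I}m f$ with $\mathcal{I}m f$ the harmonic conjugate recovers $h$ as $\mathcal{I}m f$ on the boundary. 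Uniqueness of $h$ then reduces to: (i) the harmonic extension (Poisson integral) into the strip is unique given boundary data, and (ii) the boundary data of $\mathcal{R}e f$ — which is the monotone rearrangement of $\mu$ — is itself forced by the requirement that $\mathcal{R}e(Z_{\tau}) \sim \mu$ together with $\Delta$-convexity (which makes the correspondence $x \mapsto$ (mass to the left) monotone, hence invertible).

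The key steps, in order, are therefore: (1) show every symmetric $\Delta$-convex domain is of the form $\Omega_h$, and that simple connectivity plus the domain being a genuine domain forces $h$ to be, say, lower semicontinuous with connected support containing a neighborhood of $0$; (2) show that for such a domain $E[(\tau_{\Omega_h})^{p/2}] < \infty$ forces $\Omega_h$ to be a "nice" (e.g. Hardy-class $p$) domain, so that the conformal map $f$ extends to the boundary appropriately and $\mathcal{R}e f$ has boundary values in $L^p$; (3) identify the law of $\mathcal{R}e(Z_{\tau_{\Omega_h}})$ with the push-forward of normalized Lebesgue (harmonic) measure on the boundary under $\mathcal{R}e f$, and use $\Delta$-convexity/symmetry to argue this boundary function is (a representative of) the monotone quantile function of $\mu$, hence determined by $\mu$ alone; (4) conclude that $\mathcal{I}m f$, being the harmonic conjugate of the now-determined $\mathcal{R}e f$ (normalized by symmetry so that it vanishes on the real axis), is determined, and therefore $h = \mathcal{I}m f|_{\partial}$ and $\Omega_h$ are determined.

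The main obstacle I expect is step (2)–(3): making rigorous the claim that the exit distribution, together with $\Delta$-convexity, forces the boundary trace of $\mathcal{R}e f$ to be the monotone rearrangement rather than merely \emph{some} function with distribution $\mu$. A priori $f$ could map the boundary to a function equidistributed with $\mu$ but not monotone; one must exploit the geometry — that $\Omega_h$ meets each vertical line in an interval symmetric about the real axis — to show the two "ends" of the strip's boundary (the two sides of the real axis) map to the two tails in an order-preserving way, so that $\mathcal{R}e f$ restricted to the upper boundary and to the lower boundary are each monotone and together give the quantile function. A secondary technical point is handling domains $\Omega_h$ that are unbounded or whose $h$ hits $\infty$ (wedge-like behavior at infinity), where the conformal map to a strip must be justified and the finiteness of $E[(\tau)^{p/2}]$ invoked (via the Burkholder–Hardy equivalence quoted above) to control the boundary behavior; but once the boundary identification is in hand, the uniqueness of harmonic conjugates makes the remainder routine.
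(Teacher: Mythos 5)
Your proposal takes essentially the same route as the paper's proof: normalize a conformal map onto the domain (you use a strip, the paper the disk --- an inessential difference) so that symmetry makes it real on the real axis, invoke Burkholder's equivalence to place it in $H^{p}$ and obtain boundary values, identify the law of $\mathcal{R}e(Z_{\tau})$ with the push-forward of uniform boundary measure under those boundary values, use symmetry plus $\Delta$-convexity to force the boundary trace of the real part to be the even, monotone quantile function of $\mu$, and then recover the imaginary part as its harmonic conjugate (periodic Hilbert transform) and the map itself via the Poisson integral. The two points you flag as the main obstacles are precisely the ones the paper fills in: the boundary-law identification is done by an $L^{p}$-bounded martingale convergence lemma, and the monotonicity of the boundary trace is exactly the paper's observation that symmetry and $\Delta$-convexity make $\mathcal{R}e(f(e^{i\theta}))$ a.e.\ even and non-increasing on $[0,\pi]$.
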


The importance of this result for our purposes is that it allows us to give a number of examples of domains generated by Gross' method. That is, if $\Om$ is a simply connected domain which is symmetric, $\De$-convex, and satisfies $E[(\tau_\Om)^{p/2}] < \ff$, then it must be the domain generated by Gross' method corresponding to the distribution of $Re(Z_{\tau_\Om})$. We will exploit this fact in Section \ref{examples}.

\section{Preliminaries and proof of Theorem \ref{newguy}} \label{hilbert}

The proof of Theorem \ref{newguy} is mainly based on the Hilbert transform
theory for periodic functions, and we give here a brief summary of this. For further details about the topic, we refer the reader to \citep{butzer1971hilbert}.

The Hilbert transform of a $2\pi$- periodic function $f$ is defined
by
\[
\HH_{f}(x):=PV\left\{ \frac{1}{2\pi}\int_{-\pi}^{\pi}f(x-t)\cot(\frac{t}{2})dt\right\} =\lim_{\eta\rightarrow0}\frac{1}{2\pi}\int_{\eta\leq|t|\leq\pi}f(x-t)\cot(\frac{t}{2})dt
\]
where $PV$ denotes the Cauchy principal value, which is required
here as the trigonometric function $t\longmapsto\cot(\cdot)$ has
a simple pole at $k\pi$ with $k\in\mathbb{Z}$. Note that the more standard Hilbert
transform is defined for functions $f$ on the real line by

\[
PV\left\{ \frac{1}{2\pi}\int_{-\infty}^{+\infty}\frac{f(x-t)}{t}dt\right\} .
\]

However, replacing $\frac{1}{t}$ by $\cot(\frac{t}{2})$ in the integrand is natural because $\cot(\frac{t}{2})$ is the function which results by "wrapping" $\frac{1}{t}$ around the circle; to be precise, $\cot(\cdot)$
satisfies the following identity (\citep{remmert2012theory}):

\[
\cot(z)=\frac{1}{z}+2z\sum_{n=1}^{+\infty}\frac{1}{z^{2}-n^{2}}=\frac{\pi}{z}+\pi\sum_{n=1}^{+\infty}\left(\frac{1}{z+\pi n}+\frac{1}{z-\pi n}\right)
\]

In this sense, $\cot(\frac{t}{2})$ can be seen as the periodic version of the function $\frac{1}{t}$. Let us now sketch the ideas for Gross' proof, so that we may see where the Hilbert transform comes in. We assume for now that $\mu$ has finite second moment. Let $F$ be the c.d.f of $\mu$
and consider the pseudo-inverse function of $F$ defined by
\[
G(u):=\inf\{x\in\mathbb{R}|F(x)\geq u\}.
\]

Note that $G$ is defined for $u \in[0,1]$. It is well known that $G(\mathrm{Uni}_{(0,1)})$ has $\mu$ as distribution.
Now consider the $2\pi$-periodic function $\varphi$ whose restriction
to $(-\pi,\pi)$ is $G(\frac{|\theta|}{\pi})$. The
map $\varphi$ is even, increasing on $(0,\pi)$ and belongs to $L^{2}$, where $L^p$ here and elsewhere in the paper denotes $L^p([-\pi,\pi])$.
Thus its Fourier series is well defined and converges to $\varphi$ in $L^{2}$.
We obtain hence
\[
\varphi(\theta)=\sum_{n=1}^{+\infty}\widehat{\varphi}(n)\cos(n\theta)\,\,
\]
where the $n$-th Fourier coefficient $\widehat{\varphi}(n)$ is defined
for all non negative integers $n$ by $\widehat{\varphi}(n)=\frac{1}{2\pi}\int_{0}^{2\pi}f(t)\cos(nt)dt$. It is clear that this is the real part of the power series generated by the Fourier coefficients $\widetilde{\varphi}(z):=\sum_{n=1}^{+\infty}\widehat{\varphi}(n)z^{n}$ evaluated at $z=e^{i\theta}$; that
is

\begin{equation}
\mathcal{R}e(\widetilde{\varphi}(e^{i \theta}))=\varphi(\theta)\label{real part}
\end{equation}

Note that $\mathcal{I}m(\widetilde{\varphi}(e^{i \theta}))$ is given by $\varphi(\theta)=\sum_{n=1}^{+\infty}\widehat{\varphi}(n)\sin(n\theta)$, and this is the Hilbert transform of $\varphi$. A crucial property of $\widetilde{\varphi}$, as is shown in \cite{gross2019conformal}, is that it is univalent. The image domain $\Omega:=\widetilde{\varphi}(\mathbb{D})$ is therefore simply
connected, and it is also symmetric over the $x$-axis as $\widetilde{\varphi}(\overline{z})=\overline{\widetilde{\varphi}(z)}$. Using the conformal invariance
of $Z_{t}$, and the fact that $Z_{\tau_{\mathbb{D}}}$ is uniformly
distributed on the boundary of $\partial\mathbb{D}$, we conclude by
(\ref{real part}) that $\mathcal{R}e(\widetilde{\varphi}(Z_{\tau_{\mathbb{D}}}))$ has distribution $\mu$. Furthermore, Parseval's identity and martingale theory implies that $E[\tau_\Omega] = \frac{1}{2} \sum_{n=1}^{\ff} |\widehat{\varphi}(n)|^2$ (see \cite{drum}), and this sum is finite since $\varphi \in L^{2}$.

\vski

Let us now see how we can extend this argument to prove Theorem \ref{newguy}. We will assume now that $\mu$ has a finite $p$-th moment, where $p>1$. It follows as above that $\varphi \in L^p$. The Fourier series $\sum_{n=1}^{+\infty}\widehat{\varphi}(n)\cos(n\theta)\,\,$ is still well defined and converges to $\varphi$ in $L^{p}$ (\cite[Thm. 3.5.7]{graf}). Parseval's identity is no longer available to us, but the following result allows us to conclude that the Hilbert transform $\sum_{n=1}^{+\infty}\widehat{\varphi}(n)\sin(n\theta)\,\,$ of $\varphi$ is also in $L^p$:

\begin{thm} \label{Hilb bound}
\citep{butzer1971hilbert} If $f$ is in $L^{p}$ then its periodic Hilbert transform $\HH_{f}$
does exist almost everywhere and we have
\begin{equation}
||\HH_{f}||_{L^{p}}\leq\lambda_{p}||f||_{L^{p}}\label{strong inequality}
\end{equation}
for some positive constant $\lambda_{p}$.
\end{thm}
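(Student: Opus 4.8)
The plan is to prove the classical theorem of M.\ Riesz in three stages: reduce to real-valued trigonometric polynomials, establish the bound for even integer exponents via a complex-analytic identity, and obtain the remaining exponents by interpolation and duality. Since $\HH_{f}$ is unchanged when a constant is added to $f$, we may assume $f$ is real with $\widehat f(0)=0$; and since trigonometric polynomials are dense in $L^{p}$ for $p<\ff$ and $\HH$ carries trigonometric polynomials to trigonometric polynomials (which lie in every $L^{q}$), it suffices to prove (\ref{strong inequality}) for such $f$ and then extend by continuity. Given such an $f$, let $u$ be its harmonic extension to $\DD$ and $v$ the harmonic conjugate normalised by $v(0)=0$, so that $\Phi:=u+iv$ is analytic on $\DD$ with $\Phi(0)=0$ and boundary values $f$ and $\HH_{f}$ respectively; this is exactly the situation of (\ref{real part}), with $\Phi$ in the role of $\widetilde\varphi$.

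The heart of the argument is the case $p=2k$ with $k\in\mathbb N$. As $\Phi^{2k}$ is analytic on $\DD$ and vanishes at $0$, the mean value property gives $\int_{-\pi}^{\pi}\mathcal{R}e(\Phi^{2k})\,d\th=0$. Expanding $\mathcal{R}e\big((u+iv)^{2k}\big)=\sum_{j=0}^{k}(-1)^{j}\binom{2k}{2j}u^{2k-2j}v^{2j}$ and isolating the $j=k$ term, in which every factor is a nonnegative power, yields
\begin{equation}
\|\HH_{f}\|_{L^{2k}}^{2k}=\int_{-\pi}^{\pi}v^{2k}\,d\th\ \le\ \sum_{j=0}^{k-1}\binom{2k}{2j}\int_{-\pi}^{\pi}|u|^{2k-2j}|v|^{2j}\,d\th .
\end{equation}
The $j=0$ term equals $\|u\|_{L^{2k}}^{2k}$, and to each term with $1\le j\le k-1$ I would apply H\"older's inequality with exponents $\tfrac{k}{k-j}$ and $\tfrac{k}{j}$ to get $\int|u|^{2k-2j}|v|^{2j}\le\|u\|_{L^{2k}}^{2k-2j}\|v\|_{L^{2k}}^{2j}$. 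Writing $t=\|\HH_{f}\|_{L^{2k}}/\|f\|_{L^{2k}}$ (the case $\|f\|_{L^{2k}}=0$ being trivial) this becomes the polynomial inequality $t^{2k}\le\sum_{j=0}^{k-1}\binom{2k}{2j}t^{2j}$, whose left side has strictly larger degree; hence $t$ is bounded by a constant depending only on $k$, i.e.\ $\|\HH_{f}\|_{L^{2k}}\le\la_{2k}\|f\|_{L^{2k}}$. (For $k=1$ this recovers the $L^{2}$ bound, which in any case is immediate from Parseval.)

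To fill in the other exponents, for $2\le p<\ff$ pick an even integer $2k\ge p$ and apply the Riesz--Thorin interpolation theorem between the $L^{2}$ and $L^{2k}$ bounds, giving (\ref{strong inequality}) with a finite $\la_{p}$. For $1<p<2$, set $p'=\tfrac{p}{p-1}\in(2,\ff)$; checking on the modes $\cos n\th,\sin n\th$ shows that $\int_{-\pi}^{\pi}(\HH_{f})\,g\,d\th=-\int_{-\pi}^{\pi}f\,(\HH_{g})\,d\th$ for trigonometric polynomials, so that
\begin{equation}
\|\HH_{f}\|_{L^{p}}=\sup_{\|g\|_{L^{p'}}\le1}\Big|\int_{-\pi}^{\pi}(\HH_{f})\,g\,d\th\Big|=\sup_{\|g\|_{L^{p'}}\le1}\Big|\int_{-\pi}^{\pi}f\,(\HH_{g})\,d\th\Big|\le\la_{p'}\|f\|_{L^{p}},
\end{equation}
and we may take $\la_{p}=\la_{p'}$. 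Passing to a general $f\in L^{p}$ is then routine: approximate by trigonometric polynomials $f_{n}\to f$ in $L^{p}$, observe that $(\HH_{f_{n}})$ is Cauchy in $L^{p}$ by the bound just proved, and let $\HH_{f}$ be its limit. The pointwise almost-everywhere existence of the principal-value integral is a separate matter, following from the weak $(1,1)$ bound for the maximal truncated Hilbert transform (Cotlar's inequality), which I would cite rather than reprove.

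The main obstacle is the even-integer step: it is the only genuinely non-formal ingredient, and it depends on two points that must be handled with care --- the analyticity of $\Phi^{2k}$ (so that the circular average of its real part vanishes), and the finiteness of all the integrals that appear, which is precisely why one restricts to trigonometric polynomials before carrying out the H\"older and polynomial-inequality bookkeeping. Everything afterwards is standard interpolation and duality, and the blow-up of $\la_{p}$ as $p\to1^{+}$ or $p\to\ff$ is unavoidable, since (\ref{strong inequality}) genuinely fails at the two endpoints.
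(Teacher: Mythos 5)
Your argument is correct, but note that the paper does not prove this statement at all: Theorem \ref{Hilb bound} is M.~Riesz's classical conjugate function theorem, quoted from Butzer--Nessel \citep{butzer1971hilbert}, so there is no internal proof to compare against; what you have written is essentially the standard textbook proof of the cited result. The reduction to mean-zero real trigonometric polynomials is legitimate ($\HH$ annihilates constants, and subtracting $\widehat{f}(0)$ changes $||f||_{L^{p}}$ only by a bounded factor); the even-exponent step is sound, since $\Phi^{2k}$ is analytic with $\Phi^{2k}(0)=0$, the binomial identity for $\mathcal{R}e\big((u+iv)^{2k}\big)$ is right, and H\"older with exponents $\tfrac{k}{k-j}$ and $\tfrac{k}{j}$ does give $||u||_{L^{2k}}^{2k-2j}||v||_{L^{2k}}^{2j}$, after which the degree comparison bounds $t=||\HH_{f}||_{L^{2k}}/||f||_{L^{2k}}$ by a constant depending only on $k$; Riesz--Thorin between $p=2$ and $p=2k$, together with the anti-self-adjointness $\int(\HH_{f})g=-\int f\,\HH_{g}$ (which checks out on the modes), correctly handles $2<p<\ff$ and $1<p<2$. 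Two small points deserve emphasis: the density argument defines $\HH_{f}$ for general $f\in L^{p}$ only as an $L^{p}$ limit, so the almost-everywhere existence of the principal value, and its agreement with that limit, genuinely rest on the weak-type bound for the maximal truncated transform that you cite rather than prove --- this is precisely the ``exists almost everywhere'' half of the statement; and in the duality step the supremum should be taken over trigonometric polynomials $g$, which is harmless by density but should be said. As a comparison of routes: working only with even integer exponents and then interpolating buys you a proof that never has to contend with zeros of $\Phi$ (the delicate issue in Riesz's original argument, where $\Phi$ is raised to a non-integer power), at the mild cost of a non-optimal constant $\la_{p}$ --- which is irrelevant here, since the paper only needs some finite $\la_{p}$ to conclude that $\tilde\varphi\in H^{p}$.
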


%The inequality (\ref{strong inequality}) holds also for function
%defined over the whole real line and are referred to as \emph{strong
%type estimates} \citep[Vol I, page 203]{king2009hilbert}.

%\vski

We see that, as its real and imaginary parts are in $L^p$, the analytic function $\tilde \varphi(z)=\sum_{n=1}^{+\infty}\widehat{\varphi}(n)z^n\,\,$ lies in the Hardy space $H^p$, which is the space of all holomorphic maps on the disk with finite Hardy $p$-norm, defined as

\[
||f||_{H^{q}}:=\left\{ \lim_{r \nearrow 1} \frac{1}{2\pi}\int_{0}^{2\pi}|f(re^{it})|^{q}dt\right\} ^{\frac{1}{q}}.
\]

$\tilde \varphi(z)$ is also injective, by the same argument as was used in \cite{gross2019conformal}, and therefore $\Omega=\tilde \varphi(\DD)$ is simply connected. By a theorem of Burkholder in \citep{burkholder1977exit}
we have that if $f$ is a conformal function on the unit disk then
the following equivalence holds:
\begin{equation}
\tau_{f(\mathbb{D})}\in L^{\frac{p}{2}}\,\,\Longleftrightarrow\,\,||f||_{H^{p}}<\infty\label{Hardy}
\end{equation}

We see therefore that $E[(\tau_{\Omega})^{p/2}]<\ff$, and the theorem is proved.

\section{Proof of Theorem \ref{thm:The-distribution-}.}

%The question of whether the domain corresponding to the distribution
%$\mu$ is unique was addressed in \cite{gross2019conformal}, and it was pointed out there that, without further conditions, the answer is negative. It is natural to seek conditions under which the domain is unique. In this section %we show that there is a condition that one may place on the domains which implies uniqueness; however, before doing that, we point out that the logical choice of convexity is not the appropriate condition for this question: a %parabola and infinite strip give rise to the same distribution $\mu$, as is shown in the next section.

%{\bf Remark:} None of the conditions $(i) - (iii)$ can be removed, as we show after the proof of the theorem.

\vski

In this section we prove Theorem \ref{thm:The-distribution-}, that the domain $\Omega$ generated by Gross' technique is the unique symmetric, $\De$-convex simply connected domain with $E[(\tau_{\Omega})^{p/2}]<\ff$ such that such that $\mathcal{R}e(Z_{\tau_{\Omega}})$ has
the distribution $\mu$. Before going through the proof, we need the following lemma related
to the Riemann mapping theorem.

\begin{lemma}
If $U \subsetneq \CC$ is a symmetric simply connected domain containing 0 then there exists
a conformal map from $\mathbb{D}$ to $U$ such that $f(0) = 0$ and $f((-1,1)) \subseteq \RR$.
\end{lemma}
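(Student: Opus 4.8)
The plan is to start with an arbitrary conformal map $g:\mathbb{D}\to U$ with $g(0)=0$, which exists by the Riemann mapping theorem, and then to correct it by a rotation so that its derivative at $0$ is real and positive; call the resulting map $f$, so $f(0)=0$ and $f'(0)>0$. I claim this $f$ already satisfies $f((-1,1))\subseteq\RR$. The key idea is to exploit the symmetry of $U$: since $U$ is symmetric, the map $z\mapsto\overline{f(\bar z)}$ is also a conformal map from $\mathbb{D}$ onto $U$ (it is holomorphic, injective, and its image is $\overline{U}=U$), it sends $0$ to $\overline{f(0)}=0$, and its derivative at $0$ is $\overline{f'(0)}=f'(0)>0$. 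By the uniqueness clause in the Riemann mapping theorem — a conformal map of $\mathbb{D}$ onto a fixed domain is determined by the image of $0$ and the argument of the derivative there — we conclude $\overline{f(\bar z)}=f(z)$ for all $z\in\mathbb{D}$.

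From this functional identity the conclusion is immediate: for $x\in(-1,1)$ we have $\bar x = x$, so $\overline{f(x)}=f(x)$, i.e. $f(x)\in\RR$. Hence $f((-1,1))\subseteq\RR$, as desired. I would also remark, for use in the main proof, that this same identity $\overline{f(\bar z)}=f(z)$ is precisely the statement that $f$ respects the reflection symmetry, which is the property that will be needed when comparing two such maps in Theorem \ref{thm:The-distribution-}.

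The only genuinely delicate point is making sure the normalization is legitimate and that the uniqueness statement applies: one needs $U\neq\CC$ (given, since $U\subsetneq\CC$) for the Riemann map to exist, and one needs $f'(0)\neq 0$, which holds because $f$ is conformal. The rotation step is the routine device: if $g'(0)=re^{i\alpha}$ with $r>0$, replace $g(z)$ by $f(z):=g(e^{-i\alpha}z)$, which is still a conformal bijection onto $U$ with $f(0)=0$ and $f'(0)=r>0$. After that, the symmetry argument and the standard uniqueness theorem for Riemann maps (as found in any complex analysis text) finish the proof with no further obstacle. I do not expect any real difficulty here; the lemma is essentially a packaging of the Schwarz-reflection heuristic into the precise normalization that the uniqueness proof will require.
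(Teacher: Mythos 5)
Your proof is correct and rests on the same key observation as the paper's: that $z\mapsto\overline{f(\bar z)}$ is again a conformal map of $\mathbb{D}$ onto $U$ fixing $0$, so the rigidity of Riemann maps forces $f$ to commute with conjugation. The only cosmetic difference is that you normalize $f'(0)>0$ and invoke the uniqueness clause directly, whereas the paper writes $\overline{f(\bar z)}=f(e^{i\theta}z)$ for an arbitrary normalized map and then corrects by the half-angle rotation $z\mapsto f(e^{i\theta/2}z)$; both are the same argument.
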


\begin{proof}
The existence of a conformal map, say $f$, from the unit disc to
$U$ and sending zero to itself is guaranteed by the Riemann
mapping theorem. It remains to add the constraint that $f((-1,1)) \subseteq \RR$. Consideration of the power series shows that the map $\overline{f}(\overline{z})$ is analytic, and as $\DD$ and $U$ are symmetric it is a conformal map from $\DD$ to $U$. Therefore it is related to $f$ via a rotation acting on the unit
disc, that is

\[
\overline{f}(\overline{z})=f(e^{i\theta}z)
\]

for some $\theta\in[0,2\pi)$. The map $\widetilde{f}:z\longmapsto f(e^{i\frac{\theta}{2}}z)$
satisfies the requirement of the lemma since
\[
\begin{alignedat}{1}\overline{\widetilde{f}}(\overline{z}) & =\overline{f}(e^{i\frac{\theta}{2}}\overline{z})\\
 & =\overline{f}(\overline{e^{-i\frac{\theta}{2}}z})\\
 & =f(e^{i\theta}e^{-i\frac{\theta}{2}}z)\\
 & =f(e^{i\frac{\theta}{2}}z)\\
 & =\widetilde{f}(z)
\end{alignedat}
\]
In particular, if $z$ is real then $\widetilde{f}(z)$ is as well, which
ends the proof.
\end{proof}
We proceed now to prove Theorem \ref{thm:The-distribution-}. Suppose $U$ and $V$ are two domains satisfying the conditions of the theorem. Let $f:\mathbb{D}\longrightarrow U$
and $g:\mathbb{D}\longrightarrow V$ be two conformal maps fixing $0$ and sending reals
to reals. As $f$ and $g$ are injective, they are monotone on the real line, and we may assume then that they are increasing (if not, consider $f(-z)$ and/or $g(-z)$ instead). %in addition $f(1),g(1)$ are positive
%. Note that we are allowing $f(1)$ and $g(1)$ to be positive infinity; this would correspond to  % and $\mathcal{I}m(f(z)),\mathcal{I}m(f(z))>0$ when
%are negative (similarly, consider $-f(z)$ and $-g(z)$ if required).
The power series $f(z)=\sum_{n=1}^{+\infty}a_{n}z^{n}$ and $g(z)=\sum_{n=1}^{+\infty}b_{n}z^{n}$
have real coefficients since $a_{n}=\frac{f^{(n)}(0)}{n!}\in\mathbb{R}$
and $b_{n}=\frac{g^{(n)}(0)}{n!}\in\mathbb{R}$. The fact that $E[(\tau_U)^{p/2}],E[(\tau_V)^{p/2}] < \ff$ implies that $||f||_{H^p},||g||_{H^p} < \ff$ (again, see \citep{burkholder1977exit}), and therefore the functions $f$ and $g$ have radial limits defined a.e. on $\{|z|=1\}$. That is, $f(e^{i \th}) := \lim_{r \nearrow 1} f(re^{i \th})$ exists for Lebesque almost every $\th$ on $[-\pi, \pi]$ (see \cite[Thm 17.12]{rudin} or \citep{butzer1971hilbert}). We will compare the radial limits of $f$ and $g$ and show that they coincide a.e., but first we need another lemma.

\begin{lemma} \label{}
$Z_{\tau_U}$ and $Z_{\tau_V}$ agree in distribution with $f(X)$ and $g(X)$ respectively, where $X$ is a r.v. uniformly distributed on $\{|z|=1\}$.
\end{lemma}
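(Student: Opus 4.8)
The plan is to identify $Z_{\tau_U}$ with the boundary values of $f$ evaluated at a uniform point of the circle, using the conformal invariance of planar Brownian motion together with the fact that $f$ has a well-defined radial limit a.e.\ on $\partial\DD$ (which we already know, since $E[(\tau_U)^{p/2}]<\ff$ gives $\|f\|_{H^p}<\ff$). Concretely, since $f:\DD\to U$ is conformal with $f(0)=0$, the time-changed process $f(B_t)$ (where $B_t$ is a Brownian motion in $\DD$ started at $0$, run up to its exit time $\tau_\DD$) is, after the appropriate time change, a Brownian motion in $U$ started at $0$ run up to $\tau_U$. Hence $Z_{\tau_U}$ has the same distribution as $f(B_{\tau_\DD})$, i.e.\ as $\lim_{t\nearrow\tau_\DD} f(B_t)$, and $B_{\tau_\DD}$ is by rotational symmetry of $\DD$ uniformly distributed on $\{|z|=1\}$.

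The first step is therefore to recall the conformal invariance statement precisely: if $f$ is a conformal map from $\DD$ onto $U$ with $f(0)=0$, then $\big(f(B_{\sigma(t)})\big)_{t\ge 0}$ is a planar Brownian motion started at $0$, where $\sigma$ is the inverse of the clock $t\mapsto \int_0^t |f'(B_s)|^2\,ds$; and its lifetime equals $\tau_U$. This gives $Z_{\tau_U}\stackrel{d}{=}\lim_{t\nearrow\tau_\DD}f(B_t)$. The second step is to argue that this limit equals the radial limit of $f$ along the ray through $B_{\tau_\DD}$: Brownian motion in $\DD$ approaches $\partial\DD$ non-tangentially at a single point $B_{\tau_\DD}$, so $\lim_{t\nearrow\tau_\DD}f(B_t)$ coincides with the nontangential (in particular radial) limit $f(B_{\tau_\DD})$ wherever the latter exists, which is a.e.; this is exactly the content of the statements of \cite[Thm 17.12]{rudin} combined with the fact that nontangential and radial limits of $H^p$ functions agree a.e. The third step is simply to note $B_{\tau_\DD}\sim X$, and the same argument applied to $g$ and $V$ finishes the claim.

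The main obstacle is the second step: making rigorous that the Brownian path $f(B_t)$ converges to the \emph{radial} limit value $f(X)$ rather than merely to \emph{some} boundary point. This requires knowing that (i) $B_t$ converges to a boundary point non-tangentially (a standard fact, following from the strong Markov property and the conformal invariance within $\DD$ itself, or from the skew-product representation), and (ii) $H^p$ functions have non-tangential limits equal to their radial limits a.e.\ (Fatou-type theorem). Both are classical, so the lemma is really a matter of citing the right results; I would streamline the write-up by invoking the Fatou theorem and the non-tangential convergence of Brownian motion to $\partial\DD$ directly, rather than re-deriving either. One should also note that the a.e.\ ambiguity in the definition of $f(X)$ is harmless, since $X$ has a law absolutely continuous with respect to arclength on $\{|z|=1\}$.
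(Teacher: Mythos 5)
Your first and third steps are fine and in fact mirror what the paper does at the end of its proof: conformal invariance plus the time change shows that $\lim_{t\nearrow\tau_{\DD}}f(B_t)$ exists a.s.\ and equals $\hat Z_{\tau_U}$ (the exit value of the time-changed Brownian motion), and $B_{\tau_{\DD}}$ is uniform on the circle. The gap is in your second step. The ``standard fact'' you invoke is false: planar Brownian motion does \emph{not} approach its exit point non-tangentially. Almost surely the path leaves every Stolz cone with vertex $B_{\tau_{\DD}}$ at times arbitrarily close to $\tau_{\DD}$: time-reversing at the exit time, the distance to the boundary becomes a Bessel(3) process started at $0$ while the tangential displacement is an independent Brownian motion, and after Brownian rescaling the ratio of tangential displacement to boundary distance is a stationary ergodic process with unbounded marginals, so its $\limsup$ as $t\nearrow\tau_{\DD}$ is infinite a.s. Consequently you cannot pass from ``radial (equivalently non-tangential) limits of $f$ exist a.e.'' to ``$f(B_t)\to f(B_{\tau_{\DD}})$ a.s.'' by a cone argument, and Rudin's Theorem 17.12 says nothing about limits along Brownian paths. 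The pathwise identification you want is true, but it is a genuinely harder result (the probabilistic Fatou theorem in the Burkholder--Gundy--Silverstein circle of ideas); alternatively, since $p>1$, you could note that the real and imaginary parts of $f$ are Poisson integrals of their $L^p$ boundary functions, so that $f(B_t)=E[f(B_{\tau_{\DD}})\mid\mathcal{F}_t]$ is a closed martingale and L\'evy's upward theorem identifies the a.s.\ limit as $f(B_{\tau_{\DD}})$. As written, though, the key step rests on a false claim.

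Note also that the paper sidesteps the pathwise question entirely, since only equality in distribution is needed: with $\tau_n$ the hitting time of $\{|z|=r_n\}$, the martingale $f(Z_{\tau_n})$ is bounded in $L^p$ and hence converges in $L^p$ to some $M_\ff$; on the other hand $f(Z_{\tau_n})$ has the same law as $f(r_ne^{i\Theta})$ with $\Theta$ uniform, and $\int_0^{2\pi}|f(r_ne^{i\th})-f(e^{i\th})|\,d\th\to 0$ (this is the content of the citation to Rudin), so $M_\ff$ has the law of $f(X)$; finally the time change gives $f(Z_{\tau_n})\to\hat Z_{\tau_U}$ a.s. If you prefer to keep your route, replace the cone argument by one of the results mentioned above; otherwise the distributional comparison along the circles $\{|z|=r_n\}$ is the cheaper fix.
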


{\bf Proof:} Note that in applying $f$ and $g$ to $X$, we are making use of the radial limits defined above. We will prove the statement for $f$. Let $r_n$ be any sequence in $(0,1)$ which increases to 1 as $n \to \ff$, and let $\tau_n = \inf \{t > 0: |Z_t|=r_n\}$. By standard martingale theory (see for instance \cite{willy}), since $f(Z_{\tau_n})$ is a martingale bounded in $L^p$ we are guaranteed the existence of a limiting r.v. $M_\ff$ such that $E[|f(Z_{\tau_n}) - M_\ff|^p] \to 0$. Therefore $f(Z_{\tau_n})$ converges to $M_\ff$ in distribution. On the other hand, $f(Z_{\tau_n})$ is equal in distribution to $f(X_n)$, where $X_n$ is any r.v. uniformly distributed on $\{|z|=r_n\}$. Let us choose $X_n$ and $X$ as follows. Let the probability space in question be the interval $[0,2\pi)$, with probability measure given by Lebesgue measure divided by $2 \pi$. For $\omega$ in the probability space, let $X_n(\omega) = r_n e^{i \omega}$, and similarly $X_n(\omega) = e^{i \omega}$. By \cite[Thm 17.12]{rudin}, we have

\begin{equation} \label{}
\lim_{n \to \ff} \int_{0}^{2\pi} |f(r_ne^{i \th}) - f(e^{i \th})| d \th \to 0.
\end{equation}

Thus, $E[|f(X_n)-f(X)|] \to 0$, which implies that $f(X_n)$ converges to $f(X)$ in distribution. However, $f(X_n)$ and $f(Z_{\tau_n})$ have the same distribution, and therefore $M_\ff$ and $f(X)$ agree in distribution. Now, $f(Z_t)$ is a time-changed Brownian motion, and therefore $f(Z_{\tau_n}) = \hat Z_{\sigma(\tau_n)}$, where $\sigma$ denotes the time-change and $\hat Z$ is a Brownian motion. By monotone convergence, $\sigma(\tau_n) \nearrow \tau_U$, and thus $f(Z_{\tau_n})$ converges a.s. to $\hat Z_{\tau_U}$. It follows that $\hat Z_{\tau_U}$ is equal in distribution to $f(X)$. \qed

%Due to the conformal invariance of Brownian motion, the distributions of $Z_{\tau_U}$ and $Z_{\tau_V}$ are the same as those of $f(Z_{\tau_\DD})$ and $g(Z_{\tau_\DD})$, respectively, and these agree with $f(U), g(U)$, where $U$ is uniformly distributed on the unit circle.

\vski

$\De$-convexity and symmetry imply that $Re(f(e^{i\th}))$ and $Re(g(e^{i\th}))$ are a.e. even functions on $[-\pi, \pi]$ and non-increasing on $[0, \pi]$. Since $P(U \in \{e^{i\th}: -\th_0 < \th < \th_0\}) = \frac{\th_0}{\pi}$ for $\th_0 \in (0,\pi]$, it follows that for a.e. $\th$ we must have $Re(f(e^{i\th})) = r$, where $r$ is such that $\mu[r, +\ff) = \frac{\th}{\pi}$, and the same must hold for $Re(g(e^{i\th}))$. We see that $Re(f)$ and $Re(g)$ agree a.e. on $\{|z|=1\}$, and since $Im(f), Im(g)$ are obtained from these by the periodic Hilbert transform (see Section \ref{hilbert}) we see that $f$ and $g$ agree a.e. on $\{|z|=1\}$. $f(z)$ and $g(z)$ for $z \in \DD$ can be obtained from their boundary values via the Poisson integral formula (\cite[Cor. 17.12]{rudin}), and thus $f$ and $g$ agree. Theorem \ref{thm:The-distribution-} is proved. \qed %, the two Brownian motions launched inside $U$ and $V$

\vski

None of the three conditions in the theorem can be omitted. For example, suppose that $U = \CC \backslash \{|Re(z)|\leq 1, |Im(z)|\geq 1\}$. $U$ is symmetric and $\De$-convex, but $E[(\tau_{\Omega})^{p/2}]=\ff$ for $p \geq 1$. Since $Re(Z_{\tau_{\Omega}})$ is a measure of bounded support, it will generate by Gross' method a domain $\Omega$ such that $E[(\tau_{\Omega})^{p/2}]<\ff$ for all $p$, and this can therefore not be equal to $U$. An example which is symmetric and has finite $p$-th moment for all $p$ but which lacks $\De$-convexity is displayed in Figure 2 of \citep{gross2019conformal}, and it is pointed out there that uniqueness fails. It is similarly easy to construct a domain which is $\De$-convex and has finite $p$-th moment for all $p$ but which is not symmetric, and again uniqueness fails.

\vski

Another example that may be worth noting can be found in the next section, as it is shown there that the parabola and infinite strip lead to the same distribution $\mu$. This does not contradict our result, since the parabola is neither $\De$-convex nor symmetric, but it is interesting to note that both of these domains are convex, and that therefore convexity does not seem to be the correct condition for uniqueness.

\section{Examples} \label{examples}

In this section, we consider a series of domains and the corresponding distributions of $Re(Z_\tau)$. In all cases that we consider the boundary of the domain will be well behaved and we will be able to find a p.d.f. of the distribution of $Z_{\tau}$ on the boundary. By this we mean that we can find a function, $\rho_{a}^{Z_{\tau}}(z)$, defined for $z$ on $\partial U$ such that for any interval $I$ on the boundary of $U$, we have $P_a(Z_t \in I) = \int_{b}^{c}\rho_{a}^{Z_{\tau}}(z(s))ds$, where $z(s)$ is a parameterization of $\partial U$ with $|z'(s)| = 1$ and $z((b,c))=I$. We will use analytic functions and the conformal invariance of Brownian motion as our primary tool; finding exit distributions in this manner has previously been considered in \cite{markowdist}, and following the convention there we will use the notation $\rho_{a}^{Z_{\tau}}(z) ds$ to denote this density, with the $ds$ to indicate that the curve $z(s)$ is parameterized by arclength.

\iffalse The density will be denoted $\rho_{\text{starting point}}^{\text{Process at \ensuremath{\tau}}}(\cdot)$,
for example $\rho_{a}^{Z_{\tau}}$, and we might add a subscript to
$\tau$ to mention the domain.
\fi

If we have found the p.d.f of $Z_{\tau}$ on $\partial U$,
then we can deduce the p.d.f's of $X_{\tau}$ and $Y_{\tau}$ provided
that the boundary of the domain is smooth enough in the sense that,
locally around $z=x+yi$, we have
\begin{equation}
y=\varphi_{z}(x)\label{y=00003D=00005Cphi(x)}
\end{equation}
 for some differentiable bijective function $\varphi_{z}$. To see how, let $x$ be an element of $\{\mathcal{R}e(z)|\,z\in\partial D\}$.
Since a positive infinitesimal element $dz\in\partial D$ is expressed
as $dz=\sqrt{dx^{2}+dy^{2}}$, then
\begin{equation}
\begin{alignedat}{1}\rho_{\mathcal{R}e(a)}^{X_{\tau}}(x)dx & =\sum_{\mathcal{R}e(z)=x}\rho_{a}^{Z_{\tau}}(z)dz\\
 & =\sum_{\mathcal{R}e(z)=x}\rho_{a}^{Z_{\tau}}(x+yi)\sqrt{dx^{2}+dy^{2}}\\
 & =\sum_{\mathcal{R}e(z)=x}\rho_{a}^{Z_{\tau}}(x+\varphi_{z}(x)i)\sqrt{1+\varphi_{z}'(x)^{2}}dx
\end{alignedat}
\label{X}
\end{equation}
Finally we get
\begin{equation}
\begin{alignedat}{1}\rho_{\mathcal{R}e(a)}^{X_{\tau}}(x) & =\sum_{\mathcal{R}e(z)=x}\rho_{a}^{Z_{\tau}}(x+\varphi_{z}(x)i)\sqrt{1+\left\{ {\textstyle \frac{d\varphi_{z}}{dx}(x)}\right\} ^{2}}\\
\rho_{\mathcal{I}m(a)}^{Y_{\tau}}(y) & =\sum_{\mathcal{I}m(z)=y}\rho_{a}^{Z_{\tau}}(\varphi_{z}^{-1}(y)+yi)\sqrt{1+\left\{ {\textstyle \frac{d\varphi_{z}^{-1}}{dy}(y)}\right\} ^{2}}
\end{alignedat}
\label{real =000026 imaginary}
\end{equation}
Notice that that both sets $\{z|\mathcal{R}e(z)=x\}$ and $\{z|\mathcal{I}m(z)=y\}$
are countable due to (\ref{y=00003D=00005Cphi(x)}), which justifies
the sum symbols in (\ref{X}). This proves the formula for the distribution of $X_{\tau}$, and $Y_{\tau}$ can of course be obtained similarly.
The following diagram, which should be viewed at the infinitesimal level, provides the intuitive justification for the formulas.

\[
\xymatrix{\ar@{-}[rrdd]^{\rho_{a}^{Z_{\tau}}(z)dz}\\
\\
\ar@{-}[uu]^{\rho_{\mathcal{I}m(a)}^{Y_{\tau}}(y)dy} &  & \ar@{-}[ll]^{\rho_{\mathcal{R}e(a)}^{X_{\tau}}(x)dx}
}
\]

Before going through examples, we recall
the following lemma which will be used across the rest of the work.

\begin{lemma}
If $X$ is a random variable with c.d.f $F_{X}$ and $f$
is a function where each point in its range has at most a countable
number of pre-images, then
\[
dF_{f(X)}(x)=\sum_{t\in f^{-1}\{x\}}d[F_{X}(t)]
\]
Furthermore, if $X$ has a p.d.f., say $\rho_{X}$, and $f$ is differentiable,
then
\[
dF_{f(X)}(x)=\sum_{t\in f^{-1}\{x\}}\frac{\rho_{X}(t)}{|f'(t)|}dx
\]
\end{lemma}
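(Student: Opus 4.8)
The plan is to read this as a change-of-variables (pushforward-of-measure) statement and to reduce it to the classical injective case by cutting the domain of $f$ into countably many pieces on which $f$ is one-to-one. Write $\mu_X$ for the law of $X$, so that $dF_X$ is just $\mu_X$ regarded as a Lebesgue--Stieltjes measure, and recall that by definition the law of $f(X)$ is the pushforward $\mu_{f(X)}=f_\ast\mu_X$, i.e. $\mu_{f(X)}(A)=\mu_X(f^{-1}(A))$ for every Borel $A$. The content of the first formula is then a disintegration over the fibres of $f$: we must recover $\mu_X(f^{-1}(A))$ by integrating over $x\in A$ the total $\mu_X$-mass that $f$ sends onto $x$.

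The key step is the decomposition. Since every point of the range of $f$ has at most countably many preimages, the domain of $f$ can be written as a countable disjoint union of Borel sets $B_1,B_2,\dots$ on each of which $f$ is injective (this is the Luzin--Novikov theorem; in each example considered in this paper $f$ is piecewise strictly monotone and the $B_n$ can be exhibited by hand, so nothing from descriptive set theory is really needed for the applications). Let $g_n\colon f(B_n)\to B_n$ be the inverse branch $(f|_{B_n})^{-1}$. Then for Borel $A$,
\[
\mu_{f(X)}(A)=\mu_X\!\left(f^{-1}(A)\right)=\sum_n \mu_X\!\left(B_n\cap f^{-1}(A)\right)=\sum_n (g_n)_\ast\!\left(\mu_X|_{B_n}\right)(A),
\]
so $\mu_{f(X)}=\sum_n (g_n)_\ast(\mu_X|_{B_n})$. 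The branches passing through a given $x$ correspond bijectively to $f^{-1}\{x\}$ via $n\leftrightarrow t=g_n(x)$, and the $n$-th summand is the image under $g_n$ of $\mu_X|_{B_n}$, whose mass near $x$ equals that of $\mu_X$ near $g_n(x)=t$; this is precisely the assertion $dF_{f(X)}(x)=\sum_{t\in f^{-1}\{x\}}d[F_X(t)]$.

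For the density version I would add the hypotheses that $X$ has density $\rho_X$ and that $f$ is differentiable (say $C^1$, which covers all our uses). The set $\{f'=0\}$ has Lebesgue-null image, hence contributes nothing to the now absolutely continuous law $\mu_{f(X)}$, so I may refine the $B_n$ so that also $f'\neq 0$ on each of them; then $g_n$ is differentiable with $g_n'(x)=1/f'(g_n(x))$. Applying the usual one-dimensional change-of-variables formula for an injective differentiable map branch by branch shows that $(g_n)_\ast(\mu_X|_{B_n})$ has density $x\mapsto \rho_X(g_n(x))\,|g_n'(x)|\,\mathbf{1}_{f(B_n)}(x)=\frac{\rho_X(g_n(x))}{|f'(g_n(x))|}\,\mathbf{1}_{f(B_n)}(x)$, and summing over $n$ and re-indexing by $t\in f^{-1}\{x\}$ gives $dF_{f(X)}(x)=\Big(\sum_{t\in f^{-1}\{x\}}\frac{\rho_X(t)}{|f'(t)|}\Big)\,dx$.

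The only real obstacle is a matter of rigour rather than of ideas: justifying the countable injective Borel decomposition in full generality and correctly disposing of the critical set $\{f'=0\}$ in the second part. Neither causes any trouble here, since in every application $f$ is piecewise strictly monotone and smooth, so the pieces $B_n$ and their inverse branches $g_n$ are completely explicit and the formulas reduce to summing the elementary change-of-variables identity over finitely many monotone arcs.
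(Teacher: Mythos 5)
The paper itself does not prove this lemma; it only remarks that a proof in the analytic case (the only case it uses) can be found in \cite{markowdist}. Your argument is thus a genuinely self-contained alternative, and for the first identity it is sound and more general than what the paper needs: decomposing the domain into countably many Borel pieces $B_n$ on which $f$ is injective (Luzin--Novikov in general, or explicitly for the piecewise monotone maps appearing in the examples) and writing the law of $f(X)$ as $\sum_n (g_n)_\ast(\mu_X|_{B_n})$ is exactly the right rigorous reading of the infinitesimal statement $dF_{f(X)}(x)=\sum_{t\in f^{-1}\{x\}}d[F_X(t)]$.

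For the density formula, however, your treatment of the critical set is circular: you discard $\{f'=0\}$ because its image is Lebesgue-null and hence ``contributes nothing to the now absolutely continuous law $\mu_{f(X)}$'' --- but the absolute continuity of $\mu_{f(X)}$ is precisely what is being proved. Indeed, for merely $C^1$ maps with countable fibres the stated formula can fail: let $K\subset[0,1]$ be a fat Cantor set and $f(x)=\int_0^x \mathrm{dist}(t,K)\,dt$; then $f$ is $C^1$ and strictly increasing (all fibres are singletons), yet it maps $K$, of positive Lebesgue measure, onto a Lebesgue-null set, so if $\rho_X>0$ on $K$ the law of $f(X)$ acquires a singular component and has no density at all. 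The repair is to discard the critical set at the level of $\mu_X$ rather than $\mu_{f(X)}$: one needs the extra hypothesis $\mu_X(\{f'=0\})=0$, which holds automatically in every application in the paper, since for non-constant analytic (or piecewise strictly monotone smooth) $f$ the zero set of $f'$ is countable, hence Lebesgue-null, hence $\mu_X$-null because $X$ has a density. With that observation inserted, your branchwise change of variables on the refined pieces where $f'\neq0$, followed by re-indexing the branches by $t\in f^{-1}\{x\}$, goes through as you describe and yields the lemma in the form actually used.
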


A proof in the case that $f$ is analytic (which is what we use in this paper) can be found in \cite{markowdist}.

\subsection{Unit disc.}

If $Z_{t}$ starts at zero at stopped at $\tau_{\mathbb{D}}$ then
due to the rotational invariance of the Brownian motion $Z_{\tau_{\mathbb{D}}}$is uniformly distributed
on the circle, i.e
\[
\rho_{0}^{Z_{\tau_{\mathbb{D}}}}(e^{\theta i})=\frac{1}{2\pi}
\]
Using the unit circle equation $x^{2}+y^{2}=1$, we extract the distributions
of $X_{\tau_{\mathbb{D}}}$ and $Y_{\tau_{\mathbb{D}}}$ on $(-1,1)$:
\[
\begin{alignedat}{1}\rho_{0}^{X_{\tau_{\mathbb{D}}}}(x) & \overset{\eqref{X}}{=}\sum_{z\in\{x\pm i \sqrt{1-x^{2}}\}}\rho_{a}^{Z_{\tau}}(z)\\
 & =\frac{1}{2\pi}\sqrt{1+({\textstyle \frac{x}{1-x^{2}}})^{2}}+\frac{1}{2\pi}\sqrt{1+({\textstyle -\frac{x}{1-x^{2}}})^{2}}\\
 & =\frac{1}{\pi\sqrt{1-x^{2}}}
\end{alignedat}
\]
Similarly for $\rho_{0}^{Y_{\tau_{\mathbb{D}}}}(y)$. We remark that
$X_{\tau_{\mathbb{D}}}$and $Y_{\tau_{\mathbb{D}}}$follow the Arc-sine
law on $(-1,1)$. If the starting point is $a=u+vi\neq0$, then the
distribution of $Z_{\tau_{\mathbb{D}}}$ is given by
\[
\rho_{a}^{Z_{\tau_{\mathbb{D}}}}(e^{\theta i})d\theta=\frac{1-|a|^{2}}{2\pi|1-\overline{a}e^{\theta i}|^{2}}d\theta
\]
(See \citep{markowdist}). Using the coordinates expressions
$(x,y)=(\cos\theta,\sin\theta)$, we find the distributions of $X_{\tau_{\mathbb{D}}}$
and $Y_{\tau_{\mathbb{D}}}$:
\[
\rho_{u}^{X_{\tau_{\mathbb{D}}}}(x)={\textstyle \frac{1-|a|^{2}}{2\pi\sqrt{1-x^{2}}}}\left({\textstyle \frac{1}{|1-\overline{a}(x+\sqrt{1-x^{2}}i|^{2}}}+{\textstyle \frac{1}{|1-\overline{a}(x-\sqrt{1-x^{2}}i|^{2}}}\right)
\]
and
\[
\rho_{v}^{Y_{\tau_{\mathbb{D}}}}(y)={\textstyle \frac{1-|a|^{2}}{2\pi\sqrt{1-y^{2}}}}\left({\textstyle \frac{1}{|1-\overline{a}(\sqrt{1-y^{2}}+yi|^{2}}}+{\textstyle \frac{1}{|1-\overline{a}(-\sqrt{1-y^{2}}+yi|^{2}}}\right).
\]
In particular we recover $\rho_{\mathcal{I}m(a)}^{Y_{\tau_{\mathbb{D}}}}(y)=\rho_{\mathcal{R}e(-ai)}^{X_{\tau_{\mathbb{D}}}}$.

\subsection{Parabola}

Let $S$ be the horizontal strip $\{z,\,1<\text{Im}(z)<-1\}$ and
$\mathcal{P}=f(S)$ where $f:z\longmapsto z^{2}$.The map $f$ is
not conformal as it is $2$ to $1$, however it maps the $\partial S$
to $\partial\mathcal{P}$. That is
\[
\partial\mathcal{P}=\{(u,v)|u=x^{2}-1,\,v=\pm2x,x\in\mathbb{R}\}
\]
so $\mathcal{P}$ is the area limited by the parabola of the equation
\begin{equation}
x=\frac{y^{2}}{4}-1.\label{u,v}
\end{equation}
The p.d.f of $Z_{\tau_{S}}$ starting from the origin is given by
\begin{equation}
\rho_{0}^{\tau_{S}}(z=x+i)=\frac{\mathrm{sech}(\frac{\pi}{2}x)}{2}\label{pdf}
\end{equation}
where $\mathrm{sech}(z)=\frac{2}{e^{z}+e^{-z}}$ is the hyperbolic
secant function. The density $\rho_{0}^{\tau_{S}}$ is equally shared
between the two horizontal lines of the boundary of the strip because
of symmetry. (\ref{pdf}) can be proved by conformal invariance (\cite{markowdist}) or as a consequence of the optional stopping theorem (\cite{chinjungmark}).

\iffalse
For convenience, we sketch here how to get (\ref{pdf})
using some martingale techniques (see \citep{chinjungmark}
for further explanation): the process $e^{\theta Z_{t}}$ is a martingale
with respect to the Brownian filtration, so by the optional stopping
theorem, which is applicable here, we get
\[
\begin{alignedat}{1}E(e^{\theta Z_{\tau_{S}}}) & =E(e^{\theta X_{\tau_{S}}}e^{\theta Y_{\tau_{S}}i})\\
 & =\frac{1}{2}(e^{\theta i}+e^{-\theta i})E(e^{\theta X_{\tau_{S}}})\\
 & =1
\end{alignedat}
\]
Thus, $E(e^{\theta X_{\tau_{S}}i})=\frac{2}{(e^{\theta}+e^{-\theta})}$,
which is recognized to be the characteristic function of $\frac{\mathrm{sech}(\frac{\pi}{2}\theta)}{2}$. \fi

The expression of $\rho_{0}^{Z_{\tau_{\mathcal{P}}}}$ is as follows
\[
\begin{alignedat}{1}\rho_{0}^{Z_{\tau_{\mathcal{P}}}}(w=u+vi) & =\sum_{z\in f^{-1}\{w\}}{\textstyle {\displaystyle \frac{\rho_{0}^{Z_{\tau_{S}}}(z)}{\begin{vmatrix}f'(z)\end{vmatrix}}}}\\
 & ={\displaystyle \frac{\rho_{0}^{Z_{\tau_{S}}}(\frac{v}{2}+i)}{\begin{vmatrix}f'(\frac{v}{2}+i)\end{vmatrix}}}+{\displaystyle \frac{\rho_{0}^{Z_{\tau_{S}}}(-\frac{v}{2}+i)}{\begin{vmatrix}f'(-\frac{v}{2}+i)\end{vmatrix}}}\\
 & =2{\displaystyle \frac{\rho_{0}^{Z_{\tau_{S}}}(\frac{v}{2}+i)}{\begin{vmatrix}f'(\frac{v}{2}+i)\end{vmatrix}}}\\
 & ={\displaystyle \frac{\mathrm{sech}(\frac{\pi}{4}v)}{4\sqrt{\frac{v^{2}}{4}+1}}}
\end{alignedat}
\]
where $w\in\partial\mathcal{P}$. Via (\ref{real =000026 imaginary}),
we get for $(u,v)\in(-1,+\infty)\times\mathbb{R}$
\[
\begin{alignedat}{1}\rho_{0}^{X_{\tau_{\mathcal{P}}}}(u) & =\rho_{0}^{Z_{\tau_{\mathcal{P}}}}(u+\sqrt{4u+4}i){\textstyle \sqrt{1+\frac{4}{4u+4}}}+\rho_{0}^{Z_{\tau_{\mathcal{P}}}}(u-\sqrt{4u+4}i){\textstyle \sqrt{1+\frac{4}{4u+4}}}\\
 & ={\displaystyle \frac{\mathrm{sech}(\frac{\pi}{2}\sqrt{u+1})}{2\sqrt{u+1}}}
\end{alignedat}
\]
and

\[
\begin{alignedat}{1}\rho_{0}^{Y_{\tau_{\mathcal{P}}}}(v) & =\rho_{0}^{Z_{\tau_{\mathcal{P}}}}({\textstyle \frac{v^{2}}{4}-1+vi}){\textstyle \sqrt{1+\frac{v^{2}}{4}}}\\
 & ={\displaystyle \frac{\mathrm{sech}(\frac{\pi}{4}v)}{4}}
\end{alignedat}
\]

It is a surprising fact that this agrees with the density obtained from a strip; however, as mentioned in the previous section this does not contradict Theorem \ref{thm:The-distribution-} since it is the distribution of $Im(Z_{\tau_{\mathcal{P}}})$, and $\mathcal{P}$ is not symmetric or $\De$-convex with respect to the imaginary axis. %On the other hand, $\mathcal{P}$ is convex, and we would argue that this shows that convexity is not a proper condition to impose for uniqueness.

\subsection{Ellipse of the form $\frac{x^{2}}{\cosh^{2}R}+\frac{y^{2}}{\sinh^{2}R}=1$.}

Let $E$ be the centered ellipse of equation $\frac{x^{2}}{\cosh^{2}R}+\frac{y^{2}}{\sinh^{2}R}=1$
and run a Brownian motion $Z_{t}$ starting at zero, killed at $\tau_{E}$.
In order to find the c.d.f of $Z_{\tau_{E}}$, we give a holomorphic
map $f$ acting on the horizontal strip $S_{R}:=\{z,\,R<\text{Im}(z)<-R\}$
where $R$ is a positive constant to be determined later. It turns
out that $f(z)=\sin(z)$ is a good map for this purpose, and this
is how it works: for $z=x+Ri$ we have
\[
\begin{alignedat}{1}\sin(z) & =\frac{e^{(x+Ri)i}-e^{-(x+Ri)i}}{2i}\\
 & =\frac{e^{-R}(\cos x+\sin xi)-e^{R}(\cos x-\sin xi)}{2i}\\
 & =\left({\textstyle \frac{e^{R}+e^{-R}}{2}}\right)\sin x+\left({\textstyle \frac{e^{R}-e^{-R}}{2}}\right)\cos xi\\
 & =\cosh R\sin x+\sinh R\cos xi
\end{alignedat}
.
\]
So if we set $\sin z=u+vi$ then $\frac{u^{2}}{\cosh^{2}R}+\frac{v^{2}}{\sinh^{2}R}=1$
and therefore $\sin z\in E_{a,b}$ where $a=\cosh R$ and $b=\sinh R$.
Now let $w\in\partial E_{a,b}$ and $\rho_{E}(w)$ be the p.d.f of
$Z_{\tau_{E}}$, then
\[
\begin{alignedat}{1}\rho_{E}(w=u+vi) & dw=\sum_{z\in f^{-1}\{w\}}{\textstyle {\textstyle \frac{\rho_{Z_{\tau}}(z)}{\begin{vmatrix}\cos(z)\end{vmatrix}}}}{\textstyle dw}\\
 & =\sum_{z\in f^{-1}\{w\}}{\textstyle {\textstyle {\textstyle \frac{\mathrm{sech}(\frac{\pi x}{2R})}{2R\begin{vmatrix}\cos(z)\end{vmatrix}}}}}dw\\
 & =\sum_{n\in\mathbb{Z}}{\textstyle \frac{\mathrm{sech}(\frac{\pi}{2R}\arcsin(\frac{u}{\cosh R})+\frac{n\pi^{2}}{R})}{2R\begin{vmatrix}\cos(\arcsin(\frac{u}{\cosh R})+2n\pi)\end{vmatrix}}}dw\\
 & ={\textstyle \frac{1}{2R\begin{vmatrix}\cos(\arcsin(\frac{u}{\cosh R}))\end{vmatrix}}}\sum_{n\in\mathbb{Z}}\mathrm{sech}({\textstyle \frac{\pi}{2R}}\arcsin({\textstyle \frac{u}{\cosh R}})+{\textstyle \frac{n\pi^{2}}{R}})dw\\
 & ={\textstyle \frac{\cosh R}{2R\sqrt{\cosh^{2}R-u^{2}}}}\sum_{n\in\mathbb{Z}}\mathrm{sech}({\textstyle \frac{\pi}{2R}}\arcsin({\textstyle \frac{u}{\cosh R}})+{\textstyle \frac{n\pi^{2}}{R}})dw
\end{alignedat}
\]

\subsection{Right part of the Hyperbola $x^{2}-y^{2}=1$.}

If $R:=\{z|\mathcal{R}e(z)>1\}$ then the p.d.f of $Z_{\tau_{R}}$
started at $a=\delta+\eta i\in R$ is given by \citep{markowdist}
\[
\rho_{a}^{\tau{}_{R}}(1+yi)=\frac{(\delta-1)}{\pi|1+iy-a|^{2}}dy.
\]
The square function $s:z\longmapsto z^{2}$ maps the right part limited
by the hyperbola $x^{2}-y^{2}=1$, say $H$, to $R$. Therefore for
every $z=x+yi\in\partial H$
\[
\begin{alignedat}{1}\rho_{\sqrt{a}}^{\tau_{H}}(z)dz & \overset{{\scriptscriptstyle z^{2}=1+vi}}{=}|s'(\sqrt{1+vi})|\rho_{a}^{\tau{}_{R}}(1+vi)\\
 & =2(\delta-1)\frac{\sqrt{x^{2}+y^{2}}}{\pi|1+vi-a|^{2}}dz\\
 & =2(\delta-1)\frac{\sqrt{x^{2}+y^{2}}}{\pi|1-a+2xyi|^{2}}dz
\end{alignedat}
.
\]
In particular if $a$ is real, and by using the relation $x^{2}-y^{2}=1$,
we get the densities of $X_{\tau_{H}}$ and $Y_{\tau_{H}}$:
\[
\begin{alignedat}{1}\rho_{\sqrt{a}}^{X_{\tau_{H}}}(x) & =\frac{2(\delta-1)}{\pi}\frac{2x^{2}-1}{\sqrt{x^{2}-1}}\left\{ \frac{1}{|2x\sqrt{x^{2}-1}i+1-a|^{2}}+\frac{1}{|2x\sqrt{x^{2}-1}i-1+a|^{2}}\right\} \\
\rho_{\sqrt{a}}^{Y_{\tau_{H}}}(y) & =\frac{2(\delta-1)}{\pi}\frac{2y^{2}+1}{\sqrt{1+y^{2}}|2y\sqrt{y^{2}+1}i+1-a|^{2}}
\end{alignedat}
\]

\section{Concluding remarks}

We do not know whether Theorem \ref{newguy} holds for $\frac{1}{2} \leq p \leq 1$. There are many difficulties to proving the result in this range. One is that the analogue of Theorem \ref{Hilb bound} does not hold, even for $p=1$; for a counterexample, see \cite[p. 212]{king2009hilbert}. Furthermore $H^p$ and $L^p$ are not as well behaved for $p<1$; their respective norms are not true norms, for instance, as the triangle inequality fails. In any event, regardless of the veracity of the theorem for $\frac{1}{2} \leq p \leq 1$, one should certainly exercise extreme caution in attempting to extend it to $p<\frac{1}{2}$. This is because for any simply connected domain $\Omega$ strictly smaller than $\CC$ itself we have $E[(\tau_{\Omega})^{p/2}] < \ff$ for any $p< \frac{1}{2}$; this is proved in \citep{burkholder1977exit}. Thus a measure with infinite $p$-th moment for some $p<\frac{1}{2}$ cannot correspond in this manner to a simply connected domain.

\vski

The question posed by Gross in \cite{gross2019conformal} on how properties of $\mu$ are reflected in the geometry of $\Omega$ is, in our opinion, an interesting one. Gross proposed finding a condition which forced $\Omega$ to be convex; this appears difficult, especially considering that according to Gross' simulations the domain corresponding to a Gaussian is not convex. We would like therefore to suggest several weaker properties that $\Omega$ might have, and propose that finding sufficient conditions on $\mu$ for these might be interesting problems.

\begin{itemize} \label{}

\item $\Omega$ is starlike with respect to 0.

\item $\sup_{z \in \Omega} |\mathcal{I}m(z)|<\ff$. That is, $\Omega$ is contained in an infinite horizontal strip. Note that this would imply that all moments of $\mu$ are finite, because all moments of the exit time of a strip are finite, but that this is not sufficient: if $\Omega$ is the parabolic region $\{x>y^2-1\}$, then all moments of $\tau_\Omega$ are finite (proof: $\Omega$ can fit inside a rotated and translated wedge $W_\alpha$ with arbitrarily small aperture $\alpha$, and therefore its exit time is dominated by that of the wedge, which can have finite $p$-th moment for as large $p$ as we like) but $\sup_{z \in \Omega} |\mathcal{I}m(z)|=\ff$.

\item $\limsup_{|\mathcal{R}e(z)| \to \ff, z \in \Omega} |\mathcal{I}m(z)| = 0$.

\end{itemize}

\section{Acknowledgements}

We would like to thank Zihua Guo, Paul Jung, and Wooyoung Chin for helpful comments.

\bibliographystyle{plain}
\phantomsection\addcontentsline{toc}{section}{\refname}\bibliography{Maheref}

\end{document}